\date{}
\newtheorem{nummer}{ }
\newtheorem{thm}[nummer]{Theorem}
\newtheorem{prp}[nummer]{Proposition}
\newtheorem{lem}[nummer]{Lemma}
\newtheorem{cor}[nummer]{Corollary}
\newtheorem{citedthm}{Theorem}
\def\opargproof[#1]{\par\noindent {\bf #1 }}
\begin{document}
\medskip\medskip
\begin{center}
\vspace*{50pt}
{\LARGE\bf The Neighbour Sum Problem on Trees}

\bigskip
{\small Sayan Dutta}\\[1.2ex] 
{\scriptsize Département de mathématiques et de statistique, Université de Montréal\\
sayan.dutta@umontreal.ca\\
\href{https://sites.google.com/view/sayan-dutta-homepage}{https://sites.google.com/view/sayan-dutta-homepage}}
\\[1.8ex]

{\small Sohom Gupta}\\[1.2ex] 
{\scriptsize Department of Atmospheric and Oceanic Sciences, McGill University\\
sohom.gupta@mail.mcgill.ca}\\[1.8ex]

\end{center}


\begin{abstract}
  A graph $\mathcal G = (\mathcal V, \mathcal E)$ is said to satisfy the Neighbour Sum Property if there exists some $f:\mathcal V\to\mathbb R$ such that $f\not\equiv 0$ and it maps every vertex to the sum of the values taken by its neighbours. In this article, we provide an algorithm to check whether a given finite tree satisfies the neighbour sum property. We also find a \textit{large} class of trees on $n$ vertices that satisfy the property.
\end{abstract}

\section{Introduction.}
In \cite{sayan}, the present authors along with A. Mandal and S. Chatterjee introduced what they called the \textit{Neighbour Sum Problem}. In this article, we deal with the similar problem on trees. To make this article self contained, we formally pose the problem in the present context. Begin by recalling that a locally finite tree is a connected acyclic undirected graph where every vertex has finite degree.

Let $\mathcal{T} = (\mathcal V, \mathcal E)$ be a locally finite (unlabelled) tree. Call a function $f_{\mathcal T} : \mathcal V \to \mathbb R$ to satisfy the \textit{neighbor sum property} if
\[f_{\mathcal T}(x) = \sum_{\{x,y\}\in\mathcal E}f_{\mathcal T}(y)\]
for all $x\in \mathcal V$.

We are interested in classifying all pairs $(\mathcal T, f_{\mathcal{T}})$ satisfying the neighbor sum property such that $f_{\mathcal T}\not\equiv 0$. Such an $f_{\mathcal T}$ will be called a solution for the neighbour sum problem, and $\mathcal{T}$ will be called to satisfy the neighbour sum property if it admits such an $f_{\mathcal T}$. For clarity, we will drop the subscript of $f$ unless in conflict with other notation.

Section \ref{Not} introduces some important notation to set up the algorithm presented in Section \ref{app}. Section \ref{sol} proves the condition(s) necessary for a finite tree to satisfy the neighbour sum property, and Section \ref{count} deals with computing the number of trees on $n$ vertices that do so. Section \ref{infinite} goes a step further and presents necessary condition(s) for locally finite infinite trees to satisfy the property. Finally, Section \ref{rem} presents some simple results for other kinds of graphs, which may provide a direction to extend our analysis to general graphs.

\section{Notation}\label{Not}
Set a vertex $v_0\in \mathcal{T}$ and call it the \textit{root vertex}. All objects in this section, unless specified otherwise, are defined with respect to the root vertex $v_0$. We shall not use $v_0$ in the definitions to avoid being cumbersome. 

Define the set $A_1=\{v_1: \{v_1, v_0\}\in \mathcal{E}\}$. Call this set the set of nearest-neighbours of $v_0$. Now recursively define
$$A_j \coloneqq \{v\notin A_{j-2} : \exists x \in A_{j-1} : \{v,x\}\in \mathcal E\}$$
to be the set of $j$-nearest neighbours of $v_0$. Note that any $v\in A_j$ is connected to $v_0$ by unique elements\footnote{If not, then we get at least two different paths from $v$ to $v_0$, which creates a cycle, contradicting that $\mathcal T$ is a tree.} in $A_{j-1},\:A_{j-2},\ldots,\:A_1$. This sets up the following indexing scheme (denote $N(v)$ to be the neighbour set of $v$):\begin{itemize}

    \item Index $A_1$ as $\{v_1(1),\: v_1(2),\ldots,\:v_1(|A_1|)\}$
    \item Index $A_2\cap N(v_1(r_1))$ as $\{v_2(r_1, 1),\: v_2(r_1, 2),\ldots\}$
    \item In general, index $A_{k+1}\cap N(v_k(r_1,\: r_2,\ldots,\: r_k))$ as $\{v_{k+1}(r_1,\:r_2,\ldots,\:r_k,1), v_{k+1}(r_1,\:r_2,\ldots,\:r_k,2),\ldots\}$
\end{itemize}

Notice that
$$A_{j+1}=\bigsqcup_{r_1,\:r_2,\ldots,\:r_j} A_{j+1}\cap N(v_j(r_1,\: r_2,\ldots,\: r_j))$$
and hence the indexing labels all vertices of the tree uniquely.

For example, a vertex $v_l(r_1,\:r_2,\ldots,\:r_l)$ would be connected to $v_0$ using the path $v_0\leftrightarrow v_1(r_1) \leftrightarrow v_2(r_1, r_2)\ldots\leftrightarrow v_l(r_1, r_2, \ldots, r_l)$. For notational clarity we will use $\bar{r}_l$ to denote a general indexing $r_1,\: r_2,\: \ldots,\:r_l$. A diagram is presented below to make this indexing scheme clearer.

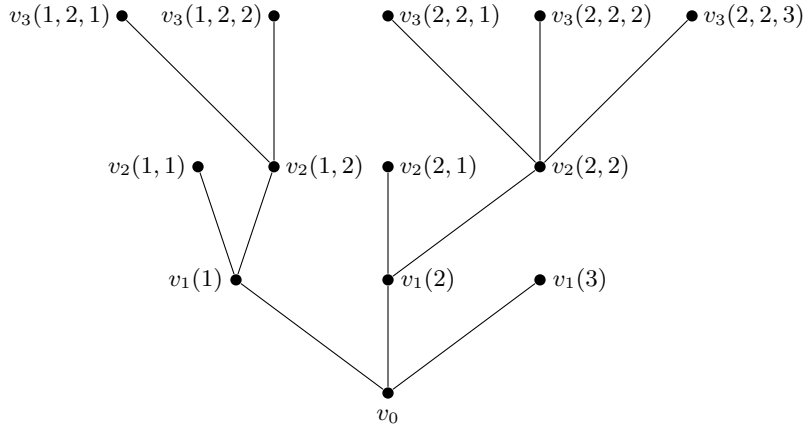
\begin{figure}[htp!]
    \centering
    \begin{tikzpicture}[
    every node/.style={circle, fill=black, inner sep=1.5pt},
    every label/.style={fill=none, inner sep=1pt, font=\footnotesize}
    ]
    
    \node[label=below:{$v_0$}] (v0) at (0,0) {};
    \node[label=left:{$v_1(1)$}] (v11) at (-2,1.5) {};
    \node[label=right:{$v_1(2)$}] (v12) at (0,1.5) {};
    \node[label=right:{$v_1(3)$}] (v13) at (2,1.5) {};
    \node[label=left:{$v_2(1,1)$}] (v211) at (-2.5,3) {};
    \node[label=right:{$v_2(1,2)$}] (v212) at (-1.5,3) {};
    \node[label=right:{$v_2(2,1)$}] (v221) at (0,3) {};
    \node[label=right:{$v_2(2,2)$}] (v222) at (2,3) {};
    \node[label=left:{$v_3(1,2,1)$}] (v3121) at (-3.5,5) {};
    \node[label=left:{$v_3(1,2,2)$}] (v3122) at (-1.5,5) {};
    \node[label=right:{$v_3(2,2,1)$}] (v3221) at (0,5) {};
    \node[label=right:{$v_3(2,2,2)$}] (v3222) at (2,5) {};
    \node[label=right:{$v_3(2,2,3)$}] (v3223) at (4,5) {};
    
    \draw (v0) -- (v11);
    \draw (v0) -- (v12);
    \draw (v0) -- (v13);
    \draw (v11) -- (v211);
    \draw (v11) -- (v212);
    \draw (v12) -- (v221);
    \draw (v12) -- (v222);
    \draw (v212) -- (v3121);
    \draw (v212) -- (v3122);
    \draw (v222) -- (v3221);
    \draw (v222) -- (v3222);
    \draw (v222) -- (v3223);
    
    \end{tikzpicture}
    \caption{Indexing scheme for a finite tree on 13 vertices.}
    \label{fintree}
\end{figure}

Finally, for a \textit{finite} tree with a root vertex $v_0$ such that the longest path starting at $v_0$ is of length $k$, we also define
$$S_{1,k}= S_{1,k}(v_0) \coloneqq \sum_{r_1}\left(1-\sum_{r_{2}}\left(1-\ldots\sum_{r_{k-1}}\left(1-\sum_{r_k} 1\right)^{-1}\ldots\right)^{-1}\right)^{-1}$$
and generally
$$S_{j,k}(\bar{r}_{j-1}) = S_{j,k}(\bar{r}_{j-1}, v_0) \coloneqq \sum_{r_j}\left(1-\sum_{r_{j+1}}\left(1-\ldots\sum_{r_{k-1}}\left(1-\sum_{r_k} 1\right)^{-1}\ldots\right)^{-1}\right)^{-1}$$
for $1<j\leq k$.

This quantity is inherently a structural property of the tree and does not depend upon the values any of the vertices take under some $f$. Note that $\sum_{r_k} 1$ just counts the number of children of a vertex $v_{k-1}(\bar{r}_{k-1})\in A_{k-1}$. Then, merely for the sake of definition consider the quantity $C_{k-1}=(1-\sum_{r_k} 1)^{-1}$. This is an algebraic expression that is then summed over $r_{k-1}$, which means a sum over $C_{k-1}$ over all children of some $v_{k-2}(\bar{r}_{k-2})\in A_{k-2}$. We continue this recursively until we reach the root vertex. Essentially, at each step, we are summing over the children of a vertex in $A_j$, but instead of counting the number of children, we count $C_{j+1}$.

Additionally, we shall make use of the following definitions as formal calculations to substantiate our proofs:\begin{enumerate}
    \item $\dfrac{1}{0} = \infty$
    \item $\dfrac{r}{\pm\infty} = 0$ for any $r\in\mathbb{R}$
    \item $r\pm\infty = \pm\infty$ for any $r\in\mathbb{R}$
    \item $\dfrac{0}{0}$ is finite.
\end{enumerate}

\section{Solutions for finite trees}\label{sol}
\begin{thm}\label{main}
    For a finite tree $\mathcal{T}=(\mathcal{V},\:\mathcal{E})$, there exists a pair $(\mathcal{T}, f)$ satisfying the neighbour sum property if and only if there is at least one vertex $v_0\in \mathcal{V}$ such that $S_{1k}(v_0)=1$.
\end{thm}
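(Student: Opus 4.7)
The plan is to reduce the NSP equations to a bottom-up recursion on the rooted tree whose output is precisely the nested expression $S_{1,k}(v_0)$. After rooting $\mathcal{T}$ at $v_0$, I would define, for every non-root $v$, an auxiliary quantity $c_v$ by $c_v = 1$ if $v$ is a leaf and $c_v = (1 - \sum_{u\text{ child of }v} c_u)^{-1}$ otherwise, interpreted via the formal arithmetic rules of Section~\ref{Not}. Unfolding the nested definition of $S_{1,k}$ shows that $S_{1,k}(v_0) = \sum_{u\text{ child of }v_0} c_u$, so the hypothesis of the theorem becomes exactly $\sum_u c_u = 1$ at the root.

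For necessity, assume $f \not\equiv 0$ satisfies NSP and pick a vertex $v_0$ with $f(v_0) \neq 0$, rooting at it. The key step is the lemma, proved by induction from the leaves upward: for every non-root $v$, $f(v) = c_v\, f(p(v))$ in the formal sense, i.e., $c_v = \infty$ forces $f(p(v)) = 0$, $c_v = 0$ forces $f(v) = 0$, and otherwise this is a genuine real equality. The base is NSP at a leaf, $f(v) = f(p(v))$ with $c_v=1$; the inductive step substitutes $f(u) = c_u f(v)$ for each child $u$ into NSP at $v$ and solves for $f(v)$. Applying the lemma to the children of $v_0$, NSP at the root rewrites as $f(v_0) = S_{1,k}(v_0)\, f(v_0)$; the case of some $c_u = \infty$ is ruled out because the lemma would then force $f(v_0) = 0$, contradicting the choice of $v_0$, and so $S_{1,k}(v_0) = 1$.

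For sufficiency, assume $S_{1,k}(v_0) = 1$ and construct $f$. In the generic case where every $c_v$ is a finite real, set $f(v_0) = 1$ and declare $f(v) = c_v f(p(v))$ recursively. NSP at each non-root $v$ reduces via $\sum_u c_u = 1 - 1/c_v$ to an algebraic identity, while NSP at the root is the hypothesis $\sum c_u = 1$. In the degenerate case that some descendant $v^*$ has $c_{v^*} = \infty$, the propagation stalls at the indeterminate $f(v^*) = \infty \cdot 0$, since $c_{p(v^*)} = (1-\infty)^{-1} = 0$ forces $f(p(v^*)) = 0$ along the chain from $v_0$. I would resolve this by reading the NSP equation at $p(v^*)$ as a defining equation for $f(v^*)$: all other neighbours of $p(v^*)$ have $f$-values already fixed by the upper propagation, and the equation determines $f(v^*)$ uniquely. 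Propagation within the subtree $T_{v^*}$ (whose $c_w$ are finite below $v^*$) then completes the construction, and a straightforward induction on the number of ``$\infty$'' vertices handles cascaded cases.

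The principal obstacle is the sufficiency direction in the degenerate subcase: verifying that the plug-in at $p(v^*)$ produces an $f$ globally consistent with NSP, and that the procedure extends cleanly when several infinities arise concurrently (for instance, if the propagation above $p(v^*)$ itself hits another $\infty$). A more robust alternative would be to work at the level of the characteristic polynomial $P_v(\lambda)$ of the adjacency matrix of $T_v$, using the recursion $P_{v_0}(1) = \prod_i P_{u_i}(1)\bigl(1 - S_{1,k}(v_0)\bigr)$ (valid when all $P_{u_i}(1) \neq 0$) together with the classical criterion that a nontrivial NSP solution exists iff $P_{\mathcal{T}}(1) = 0$; this sidesteps the formal $0/0$ and $\infty$ rules in favour of honest polynomial arithmetic.
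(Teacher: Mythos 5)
Your proposal is correct and follows essentially the same route as the paper: your bottom-up quantity $c_v$ is exactly the paper's factor $(1-S_{j+1,k}(\bar{r}_j))^{-1}$, your necessity argument is the same leaf-to-root telescoping ending in $f(v_0)=S_{1,k}(v_0)\,f(v_0)$ at a vertex with $f(v_0)\neq 0$, and your sufficiency argument is the same top-down construction $f(v)=c_v f(p(v))$ starting from $f(v_0)\neq 0$. The degenerate case you single out as the principal obstacle is precisely what the paper dispatches via its formal conventions ($1/0=\infty$, $r/\infty=0$, $0/0$ finite), and your patch of defining $f(v^*)$ from the neighbour-sum equation at $p(v^*)$ (consistent because $f(p(v^*))=0$ and the $c$-values of the children of $v^*$ sum to $1$) is, if anything, more explicit than the paper's own treatment, so no genuinely different approach and no gap worth flagging.
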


\begin{proof}
    First, let us assume that there is a solution to the neighbour sum property and call it $f$. We get the corresponding equations for each $v\in \mathcal{V}$. From now on, we will interpret $v$ not just as the vertex, but also as the value assigned to that vertex under the neighbour sum function - in other words, we will denote $f(v)$ as just $v$ wherever appropriate for a neater presentation.
    
    We have $$v_0 = \sum_{r_1} v_1(r_1)$$ for the root vertex. In general, for every vertex in $A_j$, we can write $$v_j(\bar{r}_{j-1}, r_j) = v_{j-1}(\bar{r}_{j-1})+\sum_{r_{j+1}}v_{j+1}(\bar{r}_{j-1}, r_j, r_{j+1})$$ for all $r_j$. 
    
    Suppose the longest path starting at $v_0$ is of length $k$. Equivalently, the nearest neighbours can be indexed up to $k$ with the sets $A_1,\:A_2,...,\:A_k$. For $A_k$, the neighbour sum property becomes $$v_k(\bar{r}_{k-1}, r_k) = v_{k-1}(\bar{r}_{k-1})$$ for all $r_k$.

    Using the equation for a vertex in $A_{k-1}$, we obtain 
    $$v_{k-1}(\bar{r}_{k-2}, r_{k-1}) = v_{k-2}(\bar{r}_{k-2})\left(1-\sum_{r_k} 1\right)^{-1}$$
    and recursively, 
    \begin{align*}
        & v_{k-2}(\bar{r}_{k-3}, r_{k-2}) = v_{k-3}(\bar{r}_{k-3})\left(1-\sum_{r_{k-1}}\left(1-\sum_{r_k} 1\right)^{-1}\right)^{-1} = v_{k-3}(\bar{r}_{k-3})(1-S_{k-1,k}(\bar{r}_{k-2}))^{-1}\\
        &\implies v_{k-3}(\bar{r}_{k-4}, r_{k-3}) = v_{k-4}(\bar{r}_{k-4})\left(1-\sum_{r_{k-2}}\left(1-\sum_{r_{k-1}}\left(1-\sum_{r_k} 1\right)^{-1}\right)^{-1}\right)^{-1} = v_{k-4}(\bar{r}_{k-4})(1-S_{k-2,k}(\bar{r}_{k-3}))^{-1}\\
        &.\\
        &.\\
        &.\\
        &\implies v_1(r_1) = v_0\left(1-\sum_{r_2}\left(\ldots\left(1-\sum_{r_{k-1}}\left(1-\sum_{r_k} 1\right)^{-1}\right)^{-1}\ldots\right)^{-1}\right)^{-1} = v_0(1-S_{2,k}(r_1))^{-1}\\
        &\implies v_0 = \sum_{r_1}v_1(r_1) = v_0\sum_{r_1}\left(1-\sum_{r_2}\left(\ldots\left(1-\sum_{r_{k-1}}\left(1-\sum_{r_k} 1\right)^{-1}\right)^{-1}\ldots\right)^{-1}\right)^{-1} = v_0S_{1,k}
    \end{align*}

    and hence $S_{1,k} = 1$ if $v_0\neq 0$. Since we assumed $f\not\equiv 0$, there exists some $v_0\neq 0$ implying $S_{1,k}(v_0)=1$.

    To show the converse,  we will explicitly construct the function $f$. To do so, let us set some finite $v_0 = a\neq 0$. Assuming the longest path starting from $v_0$ is of length $k$, we can check that the construction $$v_j(\bar{r}_{j-1},r_j)=v_{j-1}(\bar{r}_{j-1})(1-S_{j+1,k}(\bar{r}_j))^{-1}$$ allows $(\mathcal T, f)$ to satisfy the neighbour sum property at all vertices. 
    
    The only thing that remains to be shown that none of these values become infinitely large. This is only possible if for some $j$, $S_{j+1,k}(\bar{r}_j)=1$ and $v_{j-1}(\bar{r}_{j-1})$ is not zero.

    First we show that all $v_1(r_1)$ are finite. We have
    $$v_1(r_1)=v_0(1-S_{2,k}(r_1))^{-1} = a(1-S_{2,k}(r_1))^{-1}$$
    and hence we must have $S_{2,k}(r_1)=1$ for $v_1(r_1)$ to be infinite. However
    $$S_{1,k} = \sum_{r_1}(1-S_{2,k}(r_1))^{-1} = 1$$
    and so neither of the terms in the sum can be infinite. Therefore, $v_1(r_1)$ is finite for every $r_1$.

    Let us assume the induction hypothesis that along with $v_0$, all elements in $\bigsqcup_{i=1}^j A_i$ are finite.
    
    Check that
    $$S_{j+2,k}(\bar{r}_{j+1}) = \sum_{r_{j+2}}\left(1-S_{j+3,k}(\bar{r}_{j+2})\right)^{-1}$$
    and since we have formally defined $1/0=\infty$, therefore if any one of the terms in the sum becomes infinite, the entire sum becomes $-\infty$. Suppose $S_{j+3,k}(\bar{r}_j)=1$ for some $\bar{r}_j$. This gives us $$
    v_{j+1}(\bar{r}_{j+1}) = v_{j}(\bar{r}_{j})(1-S_{j+2,k}(\bar{r}_{j+1}))^{-1} = \dfrac{v_{j}(\bar{r}_{j})}{-\infty}=0$$
    since $v_{j}(\bar{r}_j)\in A_j$ and so is finite.

    Therefore, by strong induction, $f$ is finite at all vertices and so if there exists some $v_0\in\mathcal{V}$ such that $S_{1,k}(v_0)=1$, then there exists a pair $(\mathcal{T}, f)$ that satisfies the neighbour sum property. This completes the proof.
\end{proof}

Notice that this theorem gives us an algorithm to check whether a given tree satisfies the neighbour sum property. One simply needs to check whether there is a vertex $v_0$ such that $S_{1k}(v_0)=1$. In the next section we will share an algorithm as a python code to count the number of non-isomorphic trees on $n$ vertices that satisfy the property, but before that we wish to discuss applications of Theorem \ref{main}.

\begin{cor}\label{path}
    A path graph $\mathcal{P}$ on $n$ vertices satisfies the neighbour sum property if and only if $n = 3m+2$ for some $m\in \mathbb N$.
\end{cor}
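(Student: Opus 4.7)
The plan is to apply Theorem \ref{main} and reduce the problem to finding a root $v_0\in\mathcal{P}$ with $S_{1,k}(v_0)=1$, then explicitly compute $S_{1,k}(v_0)$ for every possible choice of root.

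First I would isolate the contribution of a single path-branch. For a child $v_1(r_1)$ of $v_0$ whose subtree is itself a path on $\ell$ vertices, let $\phi_\ell$ denote the term it contributes to $S_{1,k}(v_0)$, i.e.\ $\phi_\ell = (1-S_{2,k}(r_1))^{-1}$ (with $\phi_1=1$ read off directly from $S_{1,1}=\sum_{r_1}1$ in the degenerate case). Since each interior vertex of such a branch has exactly one child, the nested definition of $S_{j,k}$ collapses to a one-step recursion
\[\phi_{\ell+1} \;=\; \frac{1}{1-\phi_\ell}, \qquad \phi_1 = 1,\]
obtained by observing that the level-$2$ sum for a branch of $\ell+1$ vertices is just $(1-S_{2,k})^{-1}$ for the branch of $\ell$ vertices. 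Using the formal arithmetic of Section \ref{Not}, one gets $\phi_1=1$, $\phi_2=\infty$, $\phi_3=0$, after which the sequence repeats with period $3$:
\[\phi_\ell \;=\; \begin{cases} 1, & \ell\equiv 1\pmod 3,\\ \infty, & \ell\equiv 2\pmod 3,\\ 0, & \ell\equiv 0\pmod 3.\end{cases}\]

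Next I would split into two cases depending on where $v_0$ sits in $\mathcal{P}$. If $v_0$ is an endpoint, then $\mathcal{P}$ is a single branch of $n-1$ vertices, so $S_{1,k}(v_0)=\phi_{n-1}$, which equals $1$ iff $n-1\equiv 1\pmod 3$, i.e.\ $n\equiv 2\pmod 3$. If $v_0$ is an interior vertex, $\mathcal{P}$ decomposes into two path-branches on $p,q\geq 1$ vertices with $p+q=n-1$, and $S_{1,k}(v_0)=\phi_p+\phi_q$. Since $\phi_\ell\in\{0,1,\infty\}$ and the formal rule $\infty+r=\infty$ rules out any summand being infinite, an exhaustive check of $\{0,1,\infty\}^2$ shows $\phi_p+\phi_q=1$ forces $\{\phi_p,\phi_q\}=\{0,1\}$; equivalently $\{p,q\}\equiv\{0,1\}\pmod 3$, whence $p+q\equiv 1\pmod 3$ and so $n\equiv 2\pmod 3$.

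Finally I would combine the two cases: if any root witnesses the neighbour sum property, then $n\equiv 2\pmod 3$; conversely, if $n=3m+2$ then rooting at either endpoint gives $\phi_{n-1}=\phi_{3m+1}=1$, and Theorem \ref{main} produces a valid $f$. The only delicate point I anticipate is justifying the recursion $\phi_{\ell+1}=(1-\phi_\ell)^{-1}$ cleanly under the formal arithmetic of Section \ref{Not}, in particular that $(1-\infty)^{-1}=0$ and $(1-0)^{-1}=1$ propagate through the nested sums without ambiguity, and carefully ruling out every combination in $\{0,1,\infty\}^2$ other than $\{0,1\}$ in the interior-root subcase.
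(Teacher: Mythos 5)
Your proposal is correct, and it follows the paper's main route (reduce to Theorem \ref{main} and decide when $S_{1,k}(v_0)=1$ on a path), but the execution differs in two places. For the computation at a leaf, the paper treats $1-S_{1,k}$ as a finite continued fraction and tracks its convergents via the recursions $A_j=A_{j-1}-A_{j-2}$, $B_j=B_{j-1}-B_{j-2}$, reading off periodicity from those integer sequences; you instead run the recursion $\phi_{\ell+1}=(1-\phi_\ell)^{-1}$ directly in the formal arithmetic of Section \ref{Not}, getting the period-$3$ pattern $1,\infty,0$. For the converse direction, the paper never evaluates $S_{1,k}$ at interior vertices: it roots at a leaf and argues that if $S_{1,k}(\text{leaf})\neq 1$ then $f$ vanishes at that leaf and the zero propagates along the path, contradicting $f\not\equiv 0$; you instead evaluate $S_{1,k}$ at every possible root, decomposing an interior root into two branches with $S_{1,k}=\phi_p+\phi_q$ and checking all cases in $\{0,1,\infty\}^2$. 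Both are sound; your version is more self-contained (no convergent machinery) and yields the stronger information of exactly which roots witness $S_{1,k}=1$, while the paper's convergent computation is the one that scales to the level-symmetric trees treated immediately afterwards, and its zero-propagation trick sidesteps the interior-root analysis altogether. One small point to tidy: the paper's formal rules only define $r\pm\infty$ for $r\in\mathbb{R}$, so in your exhaustive check you should either note that the case $\phi_p=\phi_q=\infty$ is excluded by convention (it is $\infty$ in the appendix code's arithmetic) or observe that it corresponds to $p\equiv q\equiv 2\pmod 3$ and handle it explicitly; either way it cannot equal $1$, so the conclusion stands.
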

\begin{proof}
    Suppose $\mathcal{P}$ is a path graph on $n=3m+2$ vertices. Denote them in order as $\{v_0, v_1, \ldots, v_{n-1}\}$. Starting from $v_0$ (here $k=n-1$), we see that
    $$S_{1,k}=\cfrac{1}{1 -\cfrac{1}{1 -\cfrac{1}{\ddots -\cfrac{1}{1-1}}}}$$
    where there are $k-1=3m$ fractions. Note that $x=1-S_{1,k}$ is a finite continued fraction with convergents $$x_j=\dfrac{A_j}{B_j}$$ where $A_j=A_{j-1}-A_{j-2}$ and $B_j=B_{j-1}-B_{j-2}$ with the constraints $A_{-1}=1, A_0=1$, $B_{-1}=0, B_0=1$. It is easy to verify the periodic values taken up by both sequences $\{A_j\}_{j=0}^{k} = \{1,0,-1,-1,0,1,1,0,\ldots\}$ and $\{B_j\}_{j=0}^{k} = \{1,1,0,-1,-1,0,1,1,\ldots\}$. For $k=3m+1$, $A_k=0$ and $B_k\neq 0$ giving $x_k=0$ or $S_{1,k}=1$. Note that for $k=3m, 3m+2$, $x_k\neq 0$ so $S_{1,k}\neq 1$.

    Conversely, let there be a path graph on $n$ vertices $\mathcal{P}=(\mathcal{V}, \mathcal{E})$ that satisfies the neighbour sum property, i.e. there exists $f:\mathcal{V}\to\mathbb R$ and $v_0\in\mathcal V$ such that $S_{1k}(v_0)=1$. However, we are only interested in $S_{1k}(v_0)$ if $v_0$ is a leaf, so we will label one of the leaves as $v_0$ and label others through our indexing convention. If $S_{1k}(v_0)=1$, then we know that $n$ must be of the form $3m+2$. If $S_{1k}\neq 1$, then $f(v_0)=0$. So, $f(v_1)=0$. Recursively, by using the neighbour sum property, we can see that $f(v_j)=0$ for all $0\leq j\leq n-1$, which is a contradiction to $f\not\equiv 0$.
\end{proof}

\begin{cor}\label{star}
    A star graph $\mathcal S$ satisfies the neighbour sum property if and only if it has exactly one edge.
\end{cor}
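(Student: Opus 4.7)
The plan is to invoke Theorem \ref{main} and reduce the claim to a short case analysis on the location of the root $v_0$ in a star. A star $\mathcal{S}$ on $n$ vertices has only two orbits of vertices under its automorphism group, namely the centre $c$ and any leaf $\ell$, so I only need to compute $S_{1,k}(c)$ and $S_{1,k}(\ell)$ and decide when either equals $1$.

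For the easy direction, if $\mathcal{S}$ has exactly one edge then $n=2$; rooting at either endpoint yields $k=1$ with a single child, so $S_{1,1}=1$ and Theorem \ref{main} applies, with the constant function $f\equiv 1$ serving as an explicit solution.

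For the converse, I would fix $n\geq 3$. Rooting at the centre gives $k=1$ and $S_{1,1}(c)=n-1\geq 2$. Rooting at a leaf gives $k=2$, $A_1=\{c\}$, $|A_2|=n-2$, and hence $S_{1,2}(\ell)=(1-(n-2))^{-1}=(3-n)^{-1}$. For $n=3$ this is $1/0=\infty$ under the convention in Section \ref{Not}, while for $n\geq 4$ it lies in $[-1,0)$; neither value equals $1$. Theorem \ref{main} then forces $f\equiv 0$, contradicting $f\not\equiv 0$.

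The only potential obstacle is purely notational: checking that the formula for $S_{1,k}$ specialises correctly when $k=1$ (the nested inner sums collapse) and that the degenerate $n=3$ case is handled cleanly by the convention $1/0=\infty$ from the notation section. Both are immediate, so the entire argument amounts to one invocation of Theorem \ref{main} plus two short arithmetic computations.
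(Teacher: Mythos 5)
Your proof is correct, and it rests on the same foundation as the paper's: Theorem \ref{main} together with the computation $S_{1,1}(c)=n-1$ at the centre. The one place you diverge is in how the converse is closed. The paper never roots at a leaf; after noting $S_{1,1}(c)\neq 1$ forces $f(c)=0$, it finishes with the one-line direct observation that the neighbour sum condition at each leaf gives $f(\text{leaf})=f(c)=0$, so $f\equiv 0$. You instead make the reduction to Theorem \ref{main} complete by also computing $S_{1,2}(\ell)=(3-n)^{-1}$ at a leaf root (correctly using the orbit argument to cover all vertices, and the convention $1/0=\infty$ for $n=3$), so that the corollary becomes a pure black-box application of the theorem. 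Your route is slightly more systematic and illustrates the algorithmic use of $S_{1,k}$ at every vertex; the paper's is slightly shorter and avoids invoking the formal infinity conventions. Both are valid, and your arithmetic ($S_{1,1}(c)=n-1\geq 2$ and $(3-n)^{-1}\in\{\infty\}\cup[-1,0)$ for $n\geq 3$) checks out.
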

\begin{proof}
    Suppose a star graph has a central vertex $v_0$ and a set of leaves $A_1 = \{v_1(1),\:v_1(2),\:,\ldots,v_1(n)\}$ where $n\geq 1$. Then
    $$S_{1,k}(v_0) = S_{1,1}(v_0)=\sum_{r_1}1 = n$$
    which resolves the $n=1$ case. If $n\neq 1$, then $S_{1,k}(v_0)\neq 1$ and so for all $f:\mathcal{V}\to\mathbb{R}$ satisfying the neighbour sum property, $f(v_0)=0$.
    Now, the neighbour sum property on each $v_1(j)$ gives $v_1(j)=v_0=0$, which gives $f\equiv 0$, which is a contradiction. Thus, the only star graph that satisfies the neighbour sum property is the one with exactly one leaf, i.e. the path graph on two vertices.
\end{proof}

Of course, Corollary \ref{path} and \ref{star} are also easy to verify without using Theorem \ref{main}. So we will now provide a more interesting example. We call a tree $\mathcal L_{d,k}$ to be a \textit{level-symmetric tree} if there exists a (root) vertex $v_0$ such that the following holds simultaneously:- \begin{enumerate}
\vspace{-0.19 in}
    \item $v_0$ has degree $d\ge 1$ and every other vertex has degree $d+1$ or $1$.
    \vspace{-0.12 in}\item every path starting at $v_0$ and ending at a leaf has length $k$.
\end{enumerate}
It should be noted that while the choice of $v_0$ seems convenient, it is indeed an unique choice if $d>1$ as it is the only vertex with degree $d$. If $d=1$, there are two choices of $v_0$, namely the two leaves of the resulting path graph.

\begin{prp}
   A level symmetric tree $\mathcal L_{d,k}$ satisfies the neighbour sum property if and only if $d=1$ and $k\equiv 1(\operatorname{mod}\,3)$.
\end{prp}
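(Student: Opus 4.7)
First I would split on $d$. For $d=1$ the tree $\mathcal L_{1,k}$ is simply the path on $k+1$ vertices, so Corollary~\ref{path} immediately gives that the neighbour sum property holds iff $k+1\equiv 2\pmod 3$, i.e., $k\equiv 1\pmod 3$. The substantive work is the case $d\geq 2$, where I plan to show by strong induction on $k$ that $\mathcal L_{d,k}$ admits \emph{no} nonzero solution; the base case $k=1$ (star with $d\geq 2$ leaves) is exactly Corollary~\ref{star}.

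For the inductive step, take a solution $f$ on $\mathcal L_{d,k}$; my plan is first to show $f(v_0)=0$ at the canonical root and then reduce to the $d$ child subtrees. To establish $f(v_0)=0$, I would introduce the level sums $s_j := \sum_{v\in A_j} f(v)$. Since every non-leaf vertex of $\mathcal L_{d,k}$ has exactly $d$ children, summing the neighbour-sum equations over a single level $A_j$ collapses into a three-term linear relation, yielding $s_0=s_1$, the recurrence $s_{j+1}=s_j-d\,s_{j-1}$ for $1\leq j<k$, and the boundary $s_k=d\,s_{k-1}$. Unfolding gives $s_j=n_j(d)\,f(v_0)$ for the integer polynomial sequence defined by $n_0=n_1=1$ and $n_{j+1}=n_j-d\,n_{j-1}$; the boundary at $j=k$ then forces $n_{k+1}(d)\,f(v_0)=0$.

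The crux --- and the one place I expect any genuine thought --- is ruling out $n_{k+1}(d)=0$ for every integer $d\geq 2$. The key observation is that reducing the defining recurrence modulo $d$ yields $n_{j+1}\equiv n_j\pmod d$, and combined with $n_0=n_1=1$ an immediate induction on $j$ gives $n_j(d)\equiv 1\pmod d$ for all $j\geq 0$. Hence $n_{k+1}(d)\not\equiv 0\pmod d$, so $n_{k+1}(d)\neq 0$ in $\mathbb Z$ whenever $d\geq 2$, and we conclude $f(v_0)=0$.

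With $f(v_0)=0$, the restriction $f|_{T_i}$ to each subtree $T_i\cong\mathcal L_{d,k-1}$ rooted at a child $c_i\in A_1$ satisfies the neighbour sum property on all of $T_i$: the only equation that could fail is at $c_i$, which now reads $f(c_i)=f(v_0)+\sum_c f(c)=\sum_c f(c)$, exactly the in-tree NSP at $c_i$. The induction hypothesis then forces $f|_{T_i}\equiv 0$ for every $i$, so $f\equiv 0$, contradicting the assumed nontriviality. The main obstacle is spotting the mod-$d$ reduction; everything else is either direct computation or a citation of Corollaries~\ref{path} and~\ref{star}.
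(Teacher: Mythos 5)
Your proposal is correct, and for the hard direction ($d\geq 2$) it takes a genuinely different route from the paper. The paper runs everything through Theorem \ref{main}: by level symmetry it computes $S_{1,k}(v_0)=d\left(1-d\left(1-\cdots d\left(1-d\right)^{-1}\cdots\right)^{-1}\right)^{-1}$ at the canonical root, views $1-S_{1,k}$ as a continued fraction whose numerators satisfy $a_j=a_{j-1}-d\,a_{j-2}$ with $a_{-1}=a_0=1$, and rules out $a_k=0$ for integer $d\geq 2$ by solving the recurrence explicitly: vanishing would force $\bigl(\tfrac{1-2d}{2d}+i\tfrac{\sqrt{4d-1}}{2d}\bigr)^{k+2}=1$, i.e.\ a root of unity whose cosine $\tfrac{1}{2d}-1$ is rational, which by the algebraic-integer (Niven-type) argument happens only for $d=1$; the condition on $k$ then comes from Corollary \ref{path}, exactly as in your $d=1$ case. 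Your level sums $s_j$ satisfy, up to an index shift, the very same recurrence ($n_{j+1}=n_j-d\,n_{j-1}$, $n_0=n_1=1$, so $n_{j+1}=a_j$), but you dispose of the vanishing question with the one-line reduction $n_j\equiv 1\pmod d$, and you finish by induction on $k$ after restricting to the child subtrees $\mathcal L_{d,k-1}$ once $f(v_0)=0$ is known (your verification that the restricted function satisfies the in-tree equations at the $c_i$ is the right point to check, and it is fine). Your route is more elementary --- no explicit solution of the recurrence, no roots of unity or rational-cosine input --- and it argues directly with an arbitrary solution $f$ rather than through the criterion of Theorem \ref{main}; this also sidesteps a subtlety in the paper's use of that criterion, which in principle asks that $S_{1,k}(v)\neq 1$ be verified for every choice of root $v$, not only the canonical one, whereas your argument shows outright that any solution is identically zero. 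What the paper's computation buys in exchange is a uniform treatment of all $d$ within the $S_{1,k}$ framework it has just developed, serving as a worked illustration of Theorem \ref{main} and a template for other trees with strong level symmetry.
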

\begin{proof}
    Starting from $v_0$, it is easy to see that every vertex at a given level $A_j$ looks the same structurally. Therefore, each of the sums $\sum_{r_j}$ in $S_{1,k}(v_0)$ can be replaced by the summand scaled by the number of children at $r_j(\bar{r}_{j-1})$, i.e. $d$. We obtain
    $$S_{1,k}(v_0) = d\left(1-d\left(1-\ldots d\left(1-d\right)^{-1}\ldots\right)^{-1}\right)^{-1}$$

    We are left with finding out for what values of $d$, $S_{1,k}=1$ for positive integer value(s) of $k$. Check that $1-S_{1,k}$ is a continued fraction with convergents $x_j = a_j/b_j$ where $a_j=a_{j-1}-da_{j-2}$ and $b_j=b_{j-1}-db_{j-2}$ with $a_{-1}=1,\:a_0=1$. $b_{-1}=0, b_0=1$. Since we require $1-S_{1,k}=0$ it is enough to look for $a_k=0$. 

    Such a Fibonacci-like recurrence can be solved using several techniques. Using those or otherwise, it is easy to check
    $$a_k = 0\;\implies\left(\dfrac{1-2d}{2d}+i\dfrac{\sqrt{4d-1}}{2d}\right)^{k+2} = 1$$
    which is only possible if the term in the parentheses is a root of unity. Let us call it $e^{2ir\pi}$ where $r\in\mathbb{Q}$. Then $\cos(2r\pi) = \frac{1}{2d}-1\in\mathbb{Q}$. But, it is known that the only rational multiples of $\pi$ in $[0,\pi)$ whose cosine is also rational are $\pi/3,\:2\pi/3$.\,\footnote{Indeed, $2 \cos (r \pi) = e^{r i \pi} + e^{-r i \pi}$ is an algebraic integer. So, it is a rational number if and only if it is an integer. Also, notice that considering $[0,\pi)$ is enough since cosine is periodic and $\cos(\pi+x)=\cos x$.}
    
    Solving for $d$ gives us the only positive integer solution $d=1$. The condition on $k$ now follows from Corollary \ref{path}.
\end{proof}

Once we have come to a necessary and sufficient condition for solutions to exist on trees, it is natural to ask how many linearly independent solutions can a tree accommodate. We will address this question by proving that the dimension of the solution space can indeed be arbitrarily large.

\begin{prp}
    For each $d\in \mathbb N$, there are trees satisfying the neighbour sum property for which the solution space has dimension $d$.
\end{prp}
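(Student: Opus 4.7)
The plan is to exhibit, for each $d \geq 1$, a concrete tree $T_d$ whose solution space has dimension exactly $d$, and then to read this dimension off by directly solving the underlying linear system. Let $T_d$ be the ``spider'' obtained by attaching $d+1$ pendant paths of length $2$ to a common central vertex $c$: for each $i = 1, \ldots, d+1$, introduce an intermediate vertex $u_i$ adjacent to $c$ together with a leaf $w_i$ adjacent only to $u_i$. Note that $T_1$ is simply the path $P_5$, consistent with Corollary~\ref{path}.

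To compute the solution space, I write $\alpha = f(c)$, $a_i = f(u_i)$ and $b_i = f(w_i)$, so the neighbour sum equations read $\alpha = \sum_{i=1}^{d+1} a_i$ at the centre, $a_i = \alpha + b_i$ at each intermediate vertex, and $b_i = a_i$ at each leaf. The leaf equations immediately give $b_i = a_i$; substituting into the intermediate equations turns each into $a_i = \alpha + a_i$, which forces $\alpha = 0$. The centre equation then degenerates to the single linear constraint $\sum_{i=1}^{d+1} a_i = 0$. Hence the solution space is naturally identified with the hyperplane $\{(a_1, \ldots, a_{d+1}) \in \mathbb{R}^{d+1} : \sum_i a_i = 0\}$, which has dimension exactly $d$. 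In particular $T_d$ satisfies the neighbour sum property, as can also be re-derived from Theorem~\ref{main} by rooting at $v_0 = w_1$: a direct computation with the formal conventions of Section~\ref{Not} yields $S_{1,4}(w_1) = 1$ (the innermost sum gives $(1-1)^{-1}=\infty$, summing over the $d$ grandchildren of $c$ keeps this $\infty$, the reciprocal $1/(1-\infty)=0$ propagates outward, and the final sum over the unique child of $v_0$ yields $1$).

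The conceptual engine is the following simple observation: attaching a pendant path of length $2$ (a new vertex $u$ adjacent to an existing vertex $v$, and then a new leaf $w$ adjacent only to $u$) to any tree forces $f(v) = 0$ while leaving $f(u) = f(w)$ as a free parameter, because $f(w) = f(u)$ combined with $f(u) = f(v) + f(w)$ immediately yields $f(v) = 0$. Planting $d+1$ such pendants at a common root $c$ therefore introduces $d+1$ independent parameters subject only to the single centre constraint $\sum_i f(u_i) = f(c) = 0$, giving exactly $d$ degrees of freedom. I do not anticipate any real obstacle in this argument: everything reduces to routine verification on a finite linear system, and the only subtle point is checking that $T_d$ genuinely admits non-trivial solutions (which is confirmed both by the explicit parametrisation and, redundantly, by Theorem~\ref{main}).
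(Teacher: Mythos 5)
Your proposal is correct and uses essentially the same construction as the paper: a central vertex with $d+1$ pendant paths of length two, whose equations reduce to the single constraint $\sum_i f(u_i)=0$ after the pendant paths force $f(c)=0$. If anything, your version is slightly more complete, since you derive that every solution must have $f(c)=0$ (so the dimension is exactly $d$), whereas the paper simply defines $f(c)=0$ in its construction.
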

\begin{proof}
    Consider a tree that has a central node and $d+1$ path graphs of length two connected to this central node. Define a function $f$ that maps the central node to $0$ and each path graph to a corresponding fixed real $x_1,\:x_2,\ldots,\:x_{d+1}$ in any order (see Figure \ref{central}). Note that by definition all points in each path graph satisfy the neighbour sum property. For the central node to satisfy the property, we must have
    $$x_1+x_2+\ldots+x_{d+1}=0$$
    and this is the only independent equation. So, the tree has $d$ linearly independent solutions in $\mathbb{R}^{d+1}$. Therefore, the dimension of the solution space is $d$.
\end{proof}

\begin{figure}[htp!]
    \centering
    \begin{tikzpicture}[scale = 1.5, every node/.style={circle, draw, fill=black, minimum size=4pt, inner sep=1pt}]

    \def\k{6} 
    \def\rstep{1} 
    
    \node[anchor= north west, label={5:$0$}] (C) at (0,0) {};
    
    \foreach \i in {1,...,\k} {
        \pgfmathsetmacro{\angle}{360/\k * (\i - 1)}
    
        \node[anchor = center, label={[label distance=0cm]{\angle-60}:\small$x_{\i}$}] (A\i) at ($(C)+({\angle}:1)$) {};
        \node[anchor=center, label={[label distance=0cm]{\angle-60}:\small$x_{\i}$}] (B\i) at ($(C)+({\angle}:2)$) {};
    
        \draw (C) -- (A\i) -- (B\i);
    
    }
    
    \end{tikzpicture}
        \caption{A tree with a central node and $n=6$ path graphs of length two connected to it.}
    \label{central}
\end{figure}
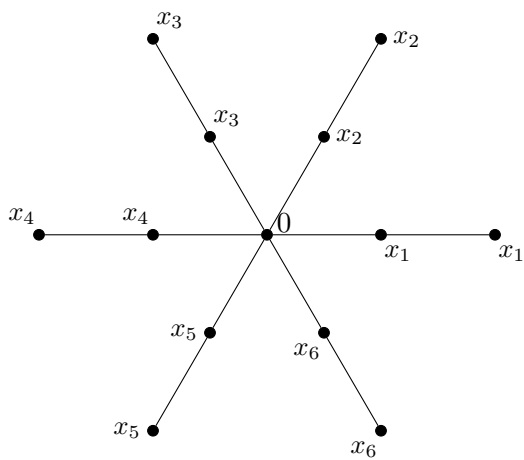

\section{Counting Number of Trees}\label{count}
In this section, we focus on the question of the number of trees that satisfy the neighbour sum property. We let $\tau(n)$ and $\sigma(n)$ denote respectively the number of unlabelled trees on $n$ vertices and the number of those that satisfies the neighbour sum property. The main ingredient we use is the classic theorem stated below.
\begin{citedthm}[R. Otter \cite{otter}]\label{otter}
    We have
    $$\tau(n) \sim C \cdot \alpha^n \cdot n^{-5/2}$$
    for
    constants $C\approx 0.53494$ and $\alpha \approx 2.9557$.\,\footnote{More precisely, $1/\alpha$ is the radius of convergence of $H(z)$ satisfying
    $$H(z) = z\cdot \exp \left( H(z) + \frac{H(z^2)}{2} + \frac{H(z^3)}{3} + \dots \right)$$
    and
    $$C=\frac{3\delta_3}{4\sqrt \pi}$$
    where $\delta_3$ is the coefficient of $Z^{3/2}$ in the expansion of
    $H(z)-\frac 12 H(z)^2$ where $Z=1-\alpha z$.
    } 
\end{citedthm}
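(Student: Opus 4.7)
The plan is to follow Otter's original approach: analyze the Pólya generating function for rooted unlabelled trees, extract its dominant singularity, and then transfer the asymptotics to unrooted trees via a dissymmetry identity. Let $H(z) = \sum_n h_n z^n$ enumerate rooted unlabelled trees. Decomposing a rooted tree as a root together with the multiset of its rooted subtrees and applying P\'olya enumeration yields the functional equation $H(z) = z\exp\bigl(\sum_{k\geq 1} H(z^k)/k\bigr)$ stated in the footnote. The dissymmetry theorem for trees then provides
$$U(z)\coloneqq \sum_n \tau(n) z^n = H(z) - \tfrac{1}{2}\bigl(H(z)^2 - H(z^2)\bigr),$$
which reduces the problem to understanding the analytic nature of $H$ near its dominant singularity.

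The second step is the singularity analysis of $H$. A crude bound $h_n \leq 4^n$ yields a positive radius of convergence $\rho = 1/\alpha$, and for $k \geq 2$ the series $H(z^k)$ remains analytic in a disk of radius $\rho^{1/k} > \rho$; hence one may write $H(z) = z\, e^{H(z) + \Phi(z)}$, where $\Phi(z) = \sum_{k\geq 2} H(z^k)/k$ is analytic in a neighborhood of $\rho$. Applying the analytic implicit function theorem to $G(z,w) = w - z\, e^{w+\Phi(z)}$, the simultaneous equations $G = 0$ and $\partial_w G = 0$ force $H(\rho) = 1$ and pin down $\rho$ (hence $\alpha$). The smooth implicit-function schema of Flajolet-Sedgewick then produces a Puiseux expansion
$$H(z) = 1 - \delta \sqrt{1-\alpha z} + c_2(1-\alpha z) + \delta_3(1-\alpha z)^{3/2} + \cdots$$
with $\delta > 0$, valid in a slit neighborhood ($\Delta$-domain) of $\rho$. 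A separate aperiodicity argument --- using that rooted unlabelled trees exist in every size $n\geq 1$ --- confirms that $\rho$ is the only singularity on $|z|=\rho$, so the Flajolet-Odlyzko transfer theorems apply.

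The third step inserts this expansion into Otter's identity. The crucial algebraic cancellation is that, because $H(\rho) = 1$, the expansion $\tfrac{1}{2} H(z)^2 = \tfrac{1}{2} - \delta \sqrt{1-\alpha z} + \cdots$ has a square-root part equal to that of $H(z)$, so the $\sqrt{1-\alpha z}$ contributions in $H(z) - \tfrac{1}{2}H(z)^2$ annihilate one another. Since $H(z^2)$ is analytic at $z = \rho$, it contributes only to the regular part of $U$. Consequently, the leading singular term of $U(z)$ is of order $(1-\alpha z)^{3/2}$ with coefficient $\delta_3$, and the Flajolet-Odlyzko transfer theorem yields
$$\tau(n) \sim \frac{\delta_3}{\Gamma(-3/2)}\,\alpha^n\, n^{-5/2} = \frac{3\delta_3}{4\sqrt{\pi}}\,\alpha^n\, n^{-5/2},$$
which is precisely the claimed asymptotic with $C = 3\delta_3/(4\sqrt{\pi})$.

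The hard part will be the singularity analysis of the implicitly defined $H$: one must justify that $\rho$ is a genuine square-root branch point (not, say, a natural boundary), that $H$ admits analytic continuation onto a $\Delta$-domain at $\rho$, and that aperiodicity rules out other singularities on $|z|=\rho$. These are the technical core of the P\'olya-Otter framework and require a Pringsheim-type argument combined with careful control of the auxiliary function $\Phi$ and a positivity/monotonicity study of $G$ near the critical point. Once these analytic facts are in place, the cancellation in Otter's identity and the application of the transfer theorem are routine.
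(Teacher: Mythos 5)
This statement is a cited classical result (Otter, 1948): the paper offers no proof of it, labelling it as Theorem A and deferring to \cite{otter} and the exposition in \cite{analcomb}, so there is no internal argument to compare against. Your outline is the standard P\'olya--Otter proof in its modern Flajolet--Sedgewick form (functional equation for rooted trees, dissymmetry identity, square-root singularity with $H(\rho)=1$, cancellation of the $\sqrt{1-\alpha z}$ term, transfer theorem with $\Gamma(-3/2)=4\sqrt{\pi}/3$), and it is correct in all its main steps; the technical points you flag as the hard part (Puiseux expansion via the smooth implicit-function schema, $\Delta$-analyticity, aperiodicity) are exactly the ones that need care. One notational slip worth fixing: the coefficient you call $\delta_3$ in the Puiseux expansion of $H$ itself is not the $\delta_3$ of the theorem's footnote --- after the square-root cancellation, the coefficient of $(1-\alpha z)^{3/2}$ in $H(z)-\tfrac12 H(z)^2$ is $\delta c_2$ in your notation (the $(1-\alpha z)^{3/2}$ terms of $H$ and $\tfrac12 H^2$ cancel just as the square-root terms do), and it is that quantity the footnote names $\delta_3$; the final constant $C=3\delta_3/(4\sqrt{\pi})$ is unaffected.
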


For a detailed exposition of this theorem and related analogues, the reader can check out \cite{analcomb}.

Using this, we will prove the following result.
\begin{thm}
    The set of unlabelled trees on $n$ vertices has a positive density subset all of whose elements satisfy the neighbour sum property.
\end{thm}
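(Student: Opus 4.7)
The plan is to exhibit a simple structural gadget whose presence in a tree forces the neighbour sum property, and then show via Pólya enumeration (combined with the rooted analogue of Theorem \ref{otter}) that a positive fraction of unlabelled trees on $n$ vertices contain it.

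\textbf{The gadget.} Call a \emph{pendant 2-path} at a vertex $v$ of a tree $T$ a subgraph $v - a - b$ with $a$ of degree $2$ and $b$ a leaf. Extending the construction in the previous proposition (with $d+1$ path graphs of length two at a central node), I first claim: whenever some vertex $v \in T$ carries two pendant 2-paths $v - a_i - b_i$ ($i = 1,2$), the tree $T$ satisfies the neighbour sum property. Indeed, setting
\[
f(a_1) = f(b_1) = 1, \qquad f(a_2) = f(b_2) = -1, \qquad f \equiv 0 \text{ elsewhere,}
\]
the neighbour-sum equation at every vertex reduces to $0=0$: at $v$ it reads $0 = 1 + (-1)$, on the pendant paths it is immediate, and everywhere else both sides vanish since $f$ and all of $f$'s neighbours are zero.

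\textbf{Counting trees with the gadget.} Let $R(z) = \sum_{n \ge 1} r_n z^n$ denote the ordinary generating function of rooted unlabelled trees, with Pólya decomposition $R(z) = z \cdot \mathrm{MSET}(R)(z)$, and let $P$ be the unique rooted tree on two vertices. Rooted trees whose root carries at least two copies of $P$ among its child subtrees decompose uniquely as two copies of $P$ together with an arbitrary further multiset of rooted subtrees, so their generating function is
\[
z \cdot z^4 \cdot \mathrm{MSET}(R)(z) \;=\; z^4 R(z),
\]
and their number on $n$ vertices is therefore $r_{n-4}$. Forgetting the root sends every such rooted tree to an unrooted tree containing the gadget; since an unrooted tree on $n$ vertices has at most $n$ distinct rootings (the orbits of $\mathrm{Aut}(T)$ on $V(T)$), the number $\rho(n)$ of unrooted unlabelled trees on $n$ vertices containing the gadget satisfies $\rho(n) \ge r_{n-4}/n$. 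The rooted analogue of Theorem \ref{otter} (proved in the same singularity-analysis framework, see \cite{analcomb}) gives $r_n \sim A\,\alpha^n n^{-3/2}$ with the same $\alpha$. Combining this with $\tau(n) \sim C\,\alpha^n n^{-5/2}$ and $\sigma(n) \ge \rho(n)$ yields
\[
\frac{\sigma(n)}{\tau(n)} \;\ge\; \frac{r_{n-4}}{n\,\tau(n)} \;\longrightarrow\; \frac{A\,\alpha^{-4}}{C} \;>\; 0,
\]
which is the required positive-density statement.

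\textbf{Main obstacle.} The enumeration itself is routine once the right gadget has been fixed; the genuine content is the choice of gadget. Using just leaves, or pendant paths of length $\ge 3$, does \emph{not} yield a solution supported on the gadget alone (one can check, for instance, that a pendant 3-path forces $f$ to be linked to $f(v)$ in a way that does not allow localisation). Pendant 2-paths work because they correspond to two copies of the smallest rooted tree that already carries a local solution, so the Pólya count of gadget-carrying rooted trees is of order $\alpha^n n^{-3/2}$; after the $1/n$ loss from derooting this matches exactly the order $\tau(n) \sim \alpha^n n^{-5/2}$, leaving the ratio bounded below.
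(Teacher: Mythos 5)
Your proposal is correct, and its first half is essentially the paper's construction: the gadget of two pendant $2$-paths at a common vertex, with values $1,1$ and $-1,-1$ and $f\equiv 0$ elsewhere, is exactly the function the paper places on the $5$-vertex path $v_1\cdots v_5$ before growing an arbitrary tree out of $v_3$. Where you genuinely diverge is the enumeration. The paper passes directly to the bound $\sigma(n)\ge\tau(n-4)$, i.e.\ it treats ``attach the gadget at a vertex of an arbitrary $(n-4)$-vertex tree'' as injective on isomorphism classes; this is stated without justification, and in fact the map is not literally injective (two non-isomorphic $(n-4)$-vertex trees can yield the same $n$-vertex tree when the gadget is attached at different vertices), so the paper's inequality as written needs repair even though the density conclusion survives. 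Your route avoids this entirely: counting \emph{rooted} trees whose root carries at least two $P$-children via the multiset decomposition gives exactly $z^4R(z)$, hence $r_{n-4}$ such rooted trees, and the crude derooting factor $1/n$ is harmless because the rooted asymptotic $r_n\sim A\,\alpha^n n^{-3/2}$ carries an extra factor of $n$ relative to $\tau(n)\sim C\,\alpha^n n^{-5/2}$, leaving the limit $A\alpha^{-4}/C>0$. The trade-off: you need the rooted analogue of Theorem \ref{otter} (available in \cite{analcomb}, same $\alpha$), whereas the paper needs only the unrooted statement but leaves its injectivity step implicit; your bookkeeping is the more watertight of the two, at the price of a weaker (but still positive) explicit constant than the paper's claimed $\alpha^{-4}$.
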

\begin{proof}
    For a large $n$, take $5$ vertices, arrange them as a line graph and label them $v_1$ through $v_5$ in order. Now, to complete the construction of the graph, create a tree using $v_3$ and all the other vertices except $v_i$, $i=1,2,4,5$. All these trees satisfy the neighbour sum property with $f(v_1)=f(v_2)=1$, $f(v_4)=f(v_5)=-1$ and $f(v)=0$ for all other vertices (see Figure \ref{paths}). So, using Theorem \ref{otter}, we have
    $$\sigma(n)\ge \tau(n-4) \sim C\cdot \alpha^{n-4} \cdot (n-4)^{-5/2}$$
    and hence for large $n$, we have
    $$\frac{\sigma(n)}{\tau(n)}\ge \frac 1{\alpha^4} \cdot \left(\frac{n}{n-4}\right)^{5/2} \xrightarrow[]{n\to \infty} \frac 1{\alpha^4}\approx 0.0131$$
    hence completing the proof.
\end{proof}

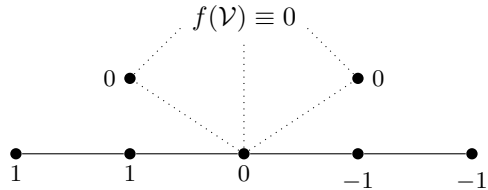
\begin{figure}[htp!]
    \centering
    \begin{tikzpicture}[
    every node/.style={circle, fill=black, inner sep=1.5pt},
    every label/.style={fill=none, inner sep=1pt, font=\footnotesize},
    expr/.style={draw=none, fill=none, shape=rectangle, font=\small}
    ]
    
    \node[label=below:{$1$}] (v0) at (0,0) {};
    \node[label=below:{$1$}] (v1) at (1.5,0) {};
    \node[label=below:{$0$}] (v2) at (3,0) {};
    \node[label=below:{$-1$}] (v3) at (4.5,0) {};
    \node[label=below:{$-1$}] (v4) at (6,0) {};
    \node[label=left:{$0$}] (v5) at (1.5,1) {};
    \node[label=right:{$0$}] (v6) at (4.5,1) {};
    
    \draw (v0) -- (v1) -- (v2) -- (v3) -- (v4);
    \draw[dotted] (v2) -- (v5);
    \draw[dotted] (v2) -- (v6);
    \draw[dotted] (v2) -- ++(0,1.5);
    \draw[dotted] (v5) -- ++(0.7,0.7);
    \draw[dotted] (v6) -- ++(-0.7,0.7);
    \node[expr] at ($(v2)+(0,1.8)$) {$f(\mathcal{V})\equiv 0$};
    
    \end{tikzpicture}
    \caption{Schematic diagram for the neighbour sum property satisfied on trees of arbitrary sizes ($\geq 5$ vertices)}
    \label{paths}
\end{figure}

The following is the output of the code to generate the number of non-isomorphic trees on $n$ vertices that satisfy the neighbour sum property, where $n$ runs from $1$ to $20$ (see code in Section \ref{app}):
$$0,1,0,0,1,2,2,6,14,29,63,166,405,977,2481,6530,16757,43534, 115700,308527$$

\section{Infinite Trees}\label{infinite}
So far, we have only considered the problem on finite trees. It should be noted that the finiteness of $\mathcal T$ is crucial in the proof of Theorem \ref{main} since it uses the values at all the leaves of the tree as well as a notion of the largest path starting from $v_0$. We shall therefore look at infinite trees with a fresh perspective.

When looking at infinite trees, we are interested in locally finite trees, so that we can continue with our current definition of the neighbour sum property. We will first prove that infinite trees which \textit{does not have too many leaves} always satisfy the neighbour sum property. The techniques we will use are similar to those used in Section $6$ of \cite{sayan}). The idea is to explicitly construct a solution using a breadth-first search approach.

\begin{prp}
    Let $\mathcal T = (\mathcal V, \mathcal E)$ be a locally finite infinite tree such that there is a $v_0\in \mathcal V$ such that for any $v\in A_j$, the children of $v$ are not all leaves. Then, $\mathcal T$ satisfies the neighbour sum property,
\end{prp}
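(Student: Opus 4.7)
The plan is to build $f$ explicitly by a breadth-first construction from $v_0$, assigning values one level at a time. Set $f(v_0) := 1$, so $f \not\equiv 0$ is automatic. Suppose inductively that $f$ has been defined on $\{v_0\} \cup A_1 \cup \cdots \cup A_{j-1}$ in such a way that the neighbour sum identity already holds at every vertex of $\{v_0\} \cup A_1 \cup \cdots \cup A_{j-2}$. Extending to $A_j$ reduces, for each $v \in A_{j-1}$ with child set $C(v) \subseteq A_j$, to the single scalar constraint
\[
    \sum_{w \in C(v)} f(w) \;=\; f(v) - f(\pi(v)),
\]
where $\pi(v)$ denotes the parent of $v$, with the convention $f(\pi(v_0)) := 0$ at the root.

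The only delicacy is that leaf children have their values forced: if $w \in C(v)$ is a leaf, then $v$ is its sole neighbour, so the equation at $w$ demands $f(w) = f(v)$. Splitting $C(v) = L(v) \sqcup \bar L(v)$ into leaf and non-leaf children and plugging in $f(w) = f(v)$ on $L(v)$, the residual constraint becomes
\[
    \sum_{w \in \bar L(v)} f(w) \;=\; (1 - |L(v)|)\, f(v) - f(\pi(v)).
\]
The hypothesis of the proposition guarantees $\bar L(v) \neq \emptyset$ whenever $v$ has any children, so I may pick a distinguished $w^* \in \bar L(v)$, assign any values (say $0$) to the remaining elements of $\bar L(v)$, and then read off $f(w^*)$ as the required residual. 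The root $v_0$ is handled in exactly the same way once one observes that $A_1$ must itself contain a non-leaf, since otherwise local finiteness would force $\mathcal T$ to be the finite star on $\{v_0\} \cup A_1$, contradicting infinitude.

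A three-case check then confirms the neighbour sum identity at every vertex: at a leaf $w$ it reads $f(w) = f(\pi(w))$, which holds by construction; at an internal $v \neq v_0$ the sum over $C(v)$ equals $f(v) - f(\pi(v))$, again by construction, so adding $f(\pi(v))$ recovers $f(v)$; at $v_0$ the sum over $A_1$ equals $f(v_0)$ by construction. Since $\mathcal V = \bigsqcup_{j \geq 0} A_j$, every vertex is reached in finitely many steps of the induction, and no convergence issue arises because only finitely many algebraic operations are performed per vertex. The one substantive ingredient is the guarantee $\bar L(v) \neq \emptyset$ at each step, which is precisely the content of the assumption (together with the short local-finiteness argument for $v_0$); this is the place where I expect the main, and indeed only, obstacle.
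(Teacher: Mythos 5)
Your construction is essentially the same as the paper's: a breadth-first inductive assignment starting from $f(v_0)=1$, with leaf children forced to equal their parent's value and the residual $(1-|L(v)|)f(v)-f(\pi(v))$ distributed over the non-leaf children, which exist by hypothesis. Your write-up is in fact slightly more careful than the paper's, e.g.\ in explicitly verifying the identity at all three types of vertices and in justifying that $v_0$ itself has a non-leaf child via local finiteness and infinitude.
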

\begin{proof}
    For a vertex $v\in \mathcal V$, let $C(v)$ be its children and let $L(v)$ and $N(v)$ be those that are leaves and not leaves respectively. By the given assumption, $\left\lvert N(v)\right\rvert \neq 0$ for all $v\in \mathcal V$.

    Now, begin constructing the neighbour sum function by setting $f(v_0)=1$ and proceed inductively from there. The neighbour sum property for $v_0$ gives
    $$\sum_{u\in N(v_0)} f(u) = \left(1-\left\lvert L(v_0)\right\rvert\right)\cdot f(v_0)$$
    and hence we only need to assign values to the vertices in $N(v_0)$ such that they sum up to $\left(1-\left\lvert L(v_0)\right\rvert\right)\cdot f(v_0)$. This can be done since $\left\lvert N(v_0)\right\rvert \neq 0$.

    Now, for a vertex $v$ with a parent $p$ (which already has a value under $f$ by induction), the neighbour sum property gives
    $$\sum_{u\in N(v)} f(u) = \left(1-\left\lvert L(v)\right\rvert\right)\cdot f(v) - f(p)$$
    and hence, now we need to assign values to the vertices in $N(v)$ such that they sum up to $\left(1-\left\lvert L(v)\right\rvert\right)\cdot f(v) - f(p)$.

    This function is clearly non-zero since $f(v_0)=1$.
\end{proof}

We will now show that the extra assumption in the previous proposition cannot be dropped.
\begin{thm}
    Not all infinite trees satisfy the neighbour sum problem.
\end{thm}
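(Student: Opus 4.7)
The plan is to exhibit a single locally finite infinite tree $\mathcal T^\star$ that admits no nonzero neighbour-sum function. Take $\mathcal V = \{v_i : i \ge 0\} \cup \{a_i, b_i : i \ge 0\}$ and let the edges be $v_i v_{i+1}$ for $i \ge 0$ (the spine) together with $v_i a_i$ and $a_i b_i$ for $i \ge 0$ (the pendants). Thus $\mathcal T^\star$ is the one-sided infinite ray $v_0 - v_1 - v_2 - \cdots$ with a two-edge tail $v_i - a_i - b_i$ grafted onto every spine vertex, each $b_i$ being a new leaf. Every vertex has degree at most $3$, so the tree is locally finite, and it is clearly infinite, connected, and acyclic.

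I will argue in three short steps that any $f$ satisfying the neighbour sum property on $\mathcal T^\star$ must be identically zero. \emph{First}, the pendant at $v_i$ on its own forces $f(v_i) = 0$: the leaf equation gives $f(b_i) = f(a_i)$, and substituting into $f(a_i) = f(v_i) + f(b_i)$ yields $f(v_i) = 0$. Hence $f$ vanishes at every spine vertex. \emph{Second}, the equation at $v_0$ reads $f(v_0) = f(v_1) + f(a_0)$, and for $i \ge 1$ the equation at $v_i$ reads $f(v_i) = f(v_{i-1}) + f(v_{i+1}) + f(a_i)$; since the spine terms already vanish, each of these collapses to $f(a_i) = 0$. \emph{Third}, $f(b_i) = f(a_i) = 0$, so $f \equiv 0$ on $\mathcal T^\star$.

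The main obstacle is finding any infinite tree rigid enough to force $f \equiv 0$ at all; many natural candidates such as one- or two-sided infinite paths, caterpillars obtained by attaching leaves to a spine, and infinite regular trees all admit non-trivial (typically periodic) solutions, so a single local obstruction will not do. The key observation that unlocks the construction is that the two-vertex pendant $a_i - b_i$ is the smallest rooted subtree whose internal equations alone pin its attachment point to zero. Replicating this forcing at every vertex of an infinite spine makes the entire spine collapse, and then the spine equations themselves reduce to the single pendant term $f(a_i)$, dragging the rest of the tree to zero.
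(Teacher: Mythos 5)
Your proof is correct and follows essentially the same route as the paper: an infinite spine with a two-edge pendant $v_i - a_i - b_i$ attached at every spine vertex, where the leaf equation forces $f(v_i)=0$ and the spine equations then force $f(a_i)=f(b_i)=0$. The only (immaterial) difference is that you use a one-sided ray indexed by $i\ge 0$, whereas the paper uses a two-sided spine indexed by $n\in\mathbb{Z}$.
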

\begin{proof}
    We will prove this by explicitly constructing an example.
    
    Consider the integers indexed by $\{v_n\}$ for each $n\in \mathbb Z$. Now, consider two other vertices $u_n$ and $w_n$ laid down to form a path with $v_n$. So, the edges of our graph are $\{v_n,v_{n+1}\}_{n\in \mathbb Z}$, $\{v_n,u_n\}_{n\in \mathbb Z}$ and $\{u_n,w_n\}_{n\in \mathbb Z}$ (see Figure \ref{Trinf}). We claim that this graph does not have any solutions for the neighbour sum problem. On the contrary, let us assume that $f$ is a solution.

    The equations for $u_n$ and $v_n$ are
    \begin{align*}
        f(u_n)&=f(w_n)+f(v_n)\\
        &=f(u_n)+f(v_n)
    \end{align*}
    since $f(w_n)=f(u_n)$ as $w_n$ is a leaf. This gives $f(v_n)=0$ for all $n$.

    But the equation for $v_n$ is
    $$f(v_n)=f(v_{n-1})+f(v_{n+1})+f(u_n)$$
    hence proving $f(u_n)=f(w_n)=0$.

    This completes the proof.
\end{proof}

\begin{figure}[htp!]
    \centering
    \begin{tikzpicture}[
    vertex/.style={circle, draw=black, fill=black, minimum size=4pt, inner sep=0pt},
    dotlabel/.style={font=\footnotesize}
    ]
    
    \foreach \i in {-2,-1,0,1,2} {
        \node[vertex, label=below:{\footnotesize$v_{\i}$}] (n\i) at (1.5*\i, 0) {};
    }
    
    \foreach \i [evaluate=\i as \j using int(\i+1)] in {-2,-1,0,1} {
        \draw (n\i) -- (n\j);
    }
    
    \foreach \i in {-2,-1,0,1} {
        \node[vertex, label=right:{$u_{\i}$}] (u\i) at ([shift=(90:1cm)] n\i.center) {};
        \node[vertex, label=right:{$w_{\i}$}] (v\i) at ([shift=(90:2cm)] n\i.center) {};
        \draw (n\i) -- (u\i)
              (u\i) -- (v\i);
    }
    \node[vertex, label=left:{$u_{2}$}] (u2) at ([shift=(90:1cm)] n2.center) {};
    \node[vertex, label=left:{$w_{2}$}] (v2) at ([shift=(90:2cm)] n2.center) {};
    \draw (n2) -- (u2)
          (u2) -- (v2);

    \draw[dashed, thick] (n-2) to[out=180, in=180] ++(-0.8,0) node[left] {\scriptsize $\cdots$};
    \draw[dashed] (n2) to[out=0, in=0] ++(0.8,0) node[right] {\scriptsize $\cdots$};
    
    \end{tikzpicture}
    \caption{An infinite tree that doesn't satisfy the neighbour sum property.}
    \label{Trinf}
\end{figure}
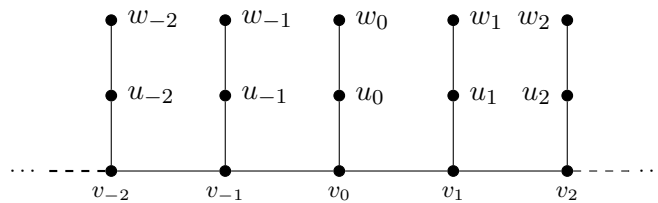

\section{Concluding Remarks}\label{rem}
Of course, our ultimate goal is to know which of the more general graphs solve this problem (see Question 31 in \cite{sayan} for the formal statement). While we do not have any concrete insights to offer on that, we would like to report the following observations that might be useful.
\begin{prp}\label{cycle}
    Cycle graphs satisfy the neighbour sum property if and only if the cardinality of the vertex set is a multiple of $6$.
\end{prp}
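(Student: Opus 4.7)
The plan is to reformulate the neighbour sum property on the cycle $C_n$ as a linear recurrence. Labelling the vertices cyclically as $v_0, v_1, \ldots, v_{n-1}$ with indices read modulo $n$, and writing $a_i = f(v_i)$, the neighbour sum condition $a_i = a_{i-1} + a_{i+1}$ is equivalent to the two-term recurrence $a_{i+1} = a_i - a_{i-1}$. Thus once $a_0$ and $a_1$ are fixed, the whole sequence is determined, and a nonzero solution on $C_n$ exists if and only if there are initial values $(a_0, a_1) \neq (0,0)$ for which the resulting sequence is $n$-periodic (the two wrap-around equalities $a_n = a_0$ and $a_{n+1} = a_1$ precisely encode the neighbour sum property at the vertices $v_0$ and $v_{n-1}$).

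For the necessity direction, I would simply iterate: with $a_0 = \alpha$, $a_1 = \beta$ one gets
$$a_2 = \beta - \alpha, \quad a_3 = -\alpha, \quad a_4 = -\beta, \quad a_5 = \alpha - \beta, \quad a_6 = \alpha, \quad a_7 = \beta,$$
so every solution sequence is automatically $6$-periodic. Its fundamental period therefore lies in $\{1,2,3,6\}$. A brief case check rules out the three smaller periods for any nonzero data: period $3$ would require $a_0 = a_3$ and $a_1 = a_4$, i.e.\ $\alpha = -\alpha$ and $\beta = -\beta$; period $2$ would require $\alpha = \beta - \alpha$ and $\beta = -\alpha$, forcing $\alpha = \beta = 0$; period $1$ is immediate. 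Hence any nonzero solution has fundamental period exactly $6$, and the $n$-periodicity condition forces $6 \mid n$.

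For the sufficiency direction, if $6 \mid n$ I would exhibit the explicit solution obtained from $(a_0, a_1) = (1, 0)$, namely the repeating block $(1, 0, -1, -1, 0, 1)$. This sequence is $6$-periodic, hence $n$-periodic when $6\mid n$, so it descends to a well-defined nonzero function on $C_n$ satisfying the neighbour sum property at every vertex. There is no real obstacle here: the whole argument is a direct iteration of the recurrence plus a short case analysis, and the only point requiring slight care is the identification of "wrapping the recurrence around the cycle" with $n$-periodicity, which is routine.
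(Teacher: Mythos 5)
Your proposal is correct and follows essentially the same route as the paper: iterate the relation $a_{i+1}=a_i-a_{i-1}$ (the paper phrases it as $f(v_{\overline{i+3}})=-f(v_{\overline{i}})$) to get $6$-periodicity, deduce $6\mid n$, and exhibit the explicit repeating block $(1,0,-1,-1,0,1)$ for sufficiency, which matches the paper's solution up to a cyclic shift. Your explicit elimination of the smaller fundamental periods $1,2,3$ is a slightly more careful handling of a step the paper passes over quickly, but it is the same argument in substance.
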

\begin{proof}
    Let $\{v_1,\dots , v_n\}$ be the vertices of an $n$-cycle. Then, a possible neighbour sum function $f$ must satisfy
    \begin{align*}
        -f(v_{\overline{i}})
        &= f(v_{\overline{i+2}}) - f(v_{\overline{i+1}})\\
        &= f(v_{\overline{i+1}}) + f(v_{\overline{i+3}}) - f(v_{\overline{i+1}})\\
        &= f(v_{\overline{i+3}})
    \end{align*}
    where $\overline j$ is used as a shorthand for $j(\text{mod } n)$.

    This prove that such a function must have a period of $6$ and hence for $f(v_1)$ to equal $f(v_{\overline{n+1}})$, $n$ has to be a multiple of $6$.

    If $n$ is indeed a multiple of $6$, setting $$f(v_k)=\begin{cases}
        1\;\;&k\equiv  0,1\;(\text{mod } 6)\\
        0\;\;&k\equiv 2,5\;(\text{mod } 6)\\
        -1\;\;&k\equiv 3,4\;(\text{mod } 6)
        
    \end{cases}$$ satisfies the neighbour sum property.\footnote{It is straightforward to verify that $f(v_{\overline 1})=-f(v_{\overline 4})=1$, $f(v_{\overline 2})=-f(v_{\overline 5})=3$ and $f(v_{\overline 3})=-f(v_{\overline 6})=2$ (where $\overline j \coloneqq j(\text{mod } 6)$) gives another class of solutions.} 
\end{proof}

Interpreting cycle graphs as one-dimensional toroidal chessboards, one can also use the techniques demonstrated in Section $4$ of \cite{sayan} to prove Proposition \ref{cycle}.

\begin{lem}\label{bip}
    The complete bipartite graph $K_{m,n}$ satisfies the neighbour sum property if and only if $m=n=1$.
\end{lem}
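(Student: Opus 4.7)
The plan is to exploit the symmetry of $K_{m,n}$ to reduce the neighbour sum system to two scalar equations. Let the bipartition be $\mathcal V = A \sqcup B$ with $|A|=m$, $|B|=n$, where every vertex of $A$ is joined to every vertex of $B$. Suppose $f$ is a solution. For any $a\in A$, its neighbourhood is all of $B$, so
\[
f(a) \;=\; \sum_{b\in B} f(b) \;=:\; \beta,
\]
and this value $\beta$ does \emph{not} depend on $a$. Similarly, for every $b \in B$,
\[
f(b) \;=\; \sum_{a\in A} f(a) \;=:\; \alpha,
\]
independently of $b$. Thus $f$ is constant on each side: $f \equiv \alpha$ on $A$ and $f \equiv \beta$ on $B$.

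Next I would substitute these constants back into the definitions of $\alpha$ and $\beta$ to get $\alpha = m\alpha$ when computed from the $B$-side and $\beta = n\beta$ from the $A$-side. More precisely, $\beta = \sum_{b \in B} f(b) = n\alpha$ and $\alpha = \sum_{a \in A} f(a) = m\beta$, which combine to
\[
\alpha \;=\; m \beta \;=\; mn\,\alpha,
\qquad\text{hence}\qquad (mn-1)\alpha \;=\; 0.
\]
The same equation holds for $\beta$. If $\alpha=0$ then also $\beta = m\alpha = 0$, forcing $f\equiv 0$, which is disallowed. Hence $mn=1$, so $m=n=1$.

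For the converse, $K_{1,1}$ is the single-edge graph on two vertices; assigning $f \equiv 1$ trivially satisfies the neighbour sum property, as each vertex has exactly one neighbour with value $1$. This establishes both directions. The argument is short and contains no real obstacle — the only subtlety is recognizing that the complete bipartite structure instantly forces $f$ to be constant on each side, after which the result reduces to the elementary equation $mn=1$ in positive integers.
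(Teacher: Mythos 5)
Your proof is correct and follows essentially the same route as the paper: completeness forces $f$ to be constant on each side of the bipartition, reducing the system to $\alpha = m\beta$, $\beta = n\alpha$, whence $mn=1$. The only blemishes are harmless label slips (you state ``$f\equiv\alpha$ on $A$ and $f\equiv\beta$ on $B$'' after defining it the other way round, and write $\beta=m\alpha$ instead of $\beta=n\alpha$ in the $\alpha=0$ case), which do not affect the argument.
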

\begin{proof}
    Let $P$ and $Q$ be the sets having $m$ and $n$ vertices respectively, and $f$ be a real valued non-zero function on its vertex set satisfying the neighbour sum property. Let elements of $P$ be labelled $x_1,\:x_2,\ldots,\:x_m$ and elements of $Q$ be labelled $y_1,\:y_2,\ldots,\:y_n$ under $f$. Since each element of $P$ in connected to every element of $Q$ (and vice versa), we will get
    $$x_1=\sum_{i=1}^n y_i=x_2=\ldots=x_m$$
    and
    $$y_1=\sum_{j=1}^mx_j=y_2=\ldots=y_n$$
    thus giving the equations $x_1=ny_1$ and $y_1=mx_1$, which have a unique non-zero solution if and only if $m=n=1$.
\end{proof}

\begin{lem}\label{complete}
    The complete graph $K_n$ satisfies the neighbour sum property if and only if $n=2$.
\end{lem}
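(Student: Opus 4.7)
The plan is to exploit the extreme symmetry of $K_n$: every vertex is adjacent to every other vertex, so the neighbour sum equation at each vertex $v$ reads
\[
f(v) = \sum_{u\neq v} f(u) = S - f(v),
\]
where $S\coloneqq \sum_{u\in V(K_n)} f(u)$. Rearranging gives $2f(v)=S$ for every vertex $v$, which forces $f$ to be constant, say $f\equiv c$ with $c = S/2$.

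Next I would substitute this back to extract the constraint on $n$. Since $S=nc$, the equation $2c=nc$ becomes $c(n-2)=0$. This immediately splits into two cases: either $c=0$, in which case $f\equiv 0$ (contradicting the requirement that $f\not\equiv 0$), or $n=2$. This handles the ``only if'' direction, and incidentally also rules out $n=1$ since the single-vertex graph has an empty neighbour sum forcing $f(v)=0$.

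For the ``if'' direction, I would simply observe that $K_2$ is a single edge on vertices $v_1,v_2$, and the assignment $f(v_1)=f(v_2)=1$ trivially satisfies $f(v_i) = f(v_{3-i})$, giving a non-zero solution. This completes the equivalence.

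There is no serious obstacle here; the only subtlety is being careful about the degenerate case $n=1$ (where $K_1$ has no edges and the equation forces $f\equiv 0$), and noting that Lemma~\ref{bip} does not immediately imply this result because $K_n$ is not bipartite for $n\geq 3$, so the argument must be carried out directly as above.
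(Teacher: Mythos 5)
Your proof is correct and follows essentially the same route as the paper: the symmetry of $K_n$ forces $f$ to be constant, and then the single equation $c=(n-1)c$ (equivalently your $c(n-2)=0$) yields $n=2$. Your derivation of constancy via $2f(v)=S$ is just a slightly more direct phrasing of the paper's pairwise comparison $x_i=x_j$.
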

\begin{proof}
    Under $f$, label the values taken by the vertices in any order as $x_1,\;x_2,\ldots,x_n$. Define $S=\sum_{i=1}^n x_i$.

    Now check that for any $1\leq i,j\leq n$, with $i\neq j$, $x_i=x_j+(S-x_j-x_i)$ and $x_j = x_i+(S-x_j-x_i)$. This gives us $x_i=x_j$, which means that the values taken by all vertices need to be the same, say $x$. Since each vertex is connected once to $n-1$ other vertices, one must have $x=(n-1)x$. This has a non-zero solution if and only if $n=2$.
\end{proof}

\begin{prp}
    The complete $k$-partite graph $K_{n_1,n_2,\ldots,n_k}$ satisfies the neighbour sum property if and only if it is $K_2 \cong K_{1,1}$.
\end{prp}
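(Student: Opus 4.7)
The plan is to exploit the rigidity of the complete $k$-partite structure: the neighbourhood of any vertex $v$ in the part $P_i$ is exactly the complement of $P_i$, so the local equation involves only the total sum and the sum over $P_i$. This forces $f$ to be constant on each part, and reduces the problem to a single algebraic identity in the part-sizes.

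First I would set up notation: let $P_1, \ldots, P_k$ be the parts with $|P_i| = n_i$, let $S := \sum_{v \in \mathcal V} f(v)$ be the total and $S_i := \sum_{v \in P_i} f(v)$ the partial sums. For any $v \in P_i$, the neighbour sum property reads $f(v) = S - S_i$. Since the right-hand side depends only on $i$, the function $f$ is constant on each $P_i$; call this constant $x_i$. Then $S_i = n_i x_i$ and the equation becomes $x_i = S - n_i x_i$, i.e.\ $(1 + n_i)\,x_i = S$ for every $i$.

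Next I would dispose of two easy cases. If $S = 0$, then $(1+n_i)x_i = 0$ forces $x_i = 0$ for all $i$, hence $f \equiv 0$, contradicting non-triviality. So $S \neq 0$ and $x_i = S/(1+n_i)$. Substituting into $S = \sum_i n_i x_i$ and cancelling $S$ yields the key identity
\[
\sum_{i=1}^{k} \frac{n_i}{1+n_i} \;=\; 1.
\]

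The final step is a short inequality argument. Since $n_i \geq 1$, each term satisfies $n_i/(1+n_i) \geq 1/2$, so the left-hand side is at least $k/2$; the identity therefore forces $k \leq 2$. The case $k=1$ is impossible (the graph has no edges), and the case $k=2$ reduces to $n_1/(1+n_1) + n_2/(1+n_2) = 1$, which after clearing denominators gives $n_1 n_2 = 1$, forcing $n_1 = n_2 = 1$. Conversely, $K_{1,1}$ trivially admits the solution $f \equiv 1$. There is no real obstacle here; the only subtlety is ensuring one treats the $S=0$ and $k=1$ degenerate cases separately before invoking the division by $S$ and the inequality. Alternatively, the $k=2$ case can simply be cited from Lemma \ref{bip} once one has reduced to $k \leq 2$.
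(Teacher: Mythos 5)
Your proof is correct, and while it shares the same opening move as the paper (the neighbourhood of any $v\in P_i$ is the complement of $P_i$, forcing $f$ to be constant on each part), the endgame is genuinely different. The paper works with pairwise relations $(1+n_i)x_i=(1+n_j)x_j$, argues that all the $x_i$ must be nonzero and of one sign, runs a pairwise inequality $n_jx_j\geq x_i$ to force every $n_i=1$, and then finishes by citing Lemma \ref{complete} for $K_{1,1,\ldots,1}\cong K_k$. You instead normalize against the global sum $S$: after ruling out $S=0$, the relations $(1+n_i)x_i=S$ collapse everything into the single identity $\sum_{i=1}^k n_i/(1+n_i)=1$, and the trivial bound $n_i/(1+n_i)\geq 1/2$ immediately gives $k\leq 2$, with the $k=2$ case solved directly by $n_1n_2=1$ (equivalently, one could cite Lemma \ref{bip} here). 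Your route buys a self-contained argument that needs neither the sign/positivity discussion nor the complete-graph lemma, and the identity $\sum n_i/(1+n_i)=1$ makes the obstruction quantitatively transparent; the paper's route, by contrast, reuses its earlier lemmas and so fits its narrative of reducing multipartite cases to $K_n$. One small point of precision: the $k=1$ case is already excluded by the identity itself (since $n_1/(1+n_1)<1$), so your aside about the edgeless graph is harmless but not needed.
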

\begin{proof}
Due to being complete, each partition will have all vertices bear the same value under any $f$ under similar arguments as in Lemma \ref{bip}. Label the values for the partitions as $x_1,\:x_2,\ldots,\:x_k$. Now note that for any $1\leq i,j\leq k$ with $i\neq j$, we have
$$x_i=n_jx_j+\sum_{r\neq i,j}n_rx_r,\;\;x_j=n_ix_i+\sum_{s\neq i,j}n_sx_s$$
which again has a unique solution if $(1+n_i)x_i=(1+n_j)x_j$. For fixed non-zero $n_i$'s, all values should have the same sign (note that $f\not\equiv 0$). Let us suppose all $x_i$s are positive. Then $$n_jx_j = \dfrac{n_j}{1+n_j}(1+n_i)x_i\geq x_i$$ with equality when $n_i=n_j=1$. Since this is true for any $i,j$, the complete $k$-partite can only have a solution if all the partitions have one vertex each, and the result follows from an application of Lemma \ref{complete}.
\end{proof}

\bibliographystyle{plain}
\bibliography{Chotushkone}

\begin{thebibliography}{1}

\bibitem{sayan}
Sayan Dutta, Ayanava Mandal, Sohom Gupta, and Sourin Chatterjee.
\newblock {Neighbour Sum Patterns : Chessboards to Toroidal Worlds}.
\newblock accepted in \textit{American Mathematical Monthly}, 2024.
\newblock \url{https://arxiv.org/abs/2310.04401}.

\bibitem{analcomb}
Philippe Flajolet and Robert Sedgewick.
\newblock {\em Analytic Combinatorics}.
\newblock Cambridge University Press, 2009.

\bibitem{otter}
Richard Otter.
\newblock The number of trees.
\newblock {\em Annals of Mathematics}, 49(3):583--599, 1948.
\newblock \url{http://www.jstor.org/stable/1969046}.

\end{thebibliography}

\section{Acknowledgements}
We would like to thank Satvik Saha and Ayanava Mandal for their meticulous scrutiny of the manuscript and helpful suggestions.

\clearpage
\section{Appendix}\label{app}
We present the code to compute the number of non-isomorphic trees on $n$ vertices that satisfy the neighbour sum property below.

\begin{minted}{python}
    import networkx as nx
    import math
    from networkx.generators.nonisomorphic_trees import nonisomorphic_trees
    
    def safe_inv(x):
        """Handles formal rules of algebra: 1/0 = inf, r/inf = 0"""
        if x == 0:
            return float('inf')
        elif math.isinf(x):
            return 0.0
        else:
            return 1.0 / x
    
    def compute_S(tree, node, parent):
        """Recursively compute S_{jk} from a node in a rooted tree."""
        children = [n for n in tree.neighbors(node) if n != parent]
        if not children:
            return 0.0
        subS = [compute_S(tree, child, node) for child in children]
        return sum([safe_inv(1 - s) for s in subS])
    
    def tree_satisfies_ns(tree):
        """Check if the tree satisfies the NS property by checking S_{1k}(v) = 1."""
        for root in tree.nodes:
            if math.isclose(compute_S(tree, root, None), 1.0, rel_tol=1e-9):
                return True
        return False
    
    def count_ns_trees(max_n):
        """Return a dictionary of {n: number of trees satisfying NS} for n = 1 to max_n."""
        ns_counts = {}
        ns_counts[1]=0 #Assumption
        ns_counts[2]=1 #This is the path graph on two vertices
        print(ns_counts[1],'\n',ns_counts[2])
        for n in range(3, max_n + 1):
            count = 0
            for tree in nonisomorphic_trees(n):
                if tree_satisfies_ns(tree):
                    count += 1
            ns_counts[n] = count
            print(ns_counts[n])
        return ns_counts
    ns_trees = count_ns_trees(20)
\end{minted}
\end{document}